\journal{Journal of Computational and Applied Mathematics }
\newtheorem{theorem}{Theorem}[section]
\newtheorem{example}{Example}[section]%
\newtheorem{remark}{Remark}[section]%
\newtheorem{coro}{Corollary}[section]%
\newtheorem{prop}{Proposition}[section]%
\newtheorem{assumption}{Assumption}[section]
\pgfplotsset{compat=1.16}
\tikzset{
   mybox/.style  = {draw, rectangle, minimum width=4cm, minimum height=0.8cm, text centered, text width=4.4cm,   
  font=\normalsize},
  box/.style  = {draw, rectangle, minimum width=4.0cm, minimum height=0.8cm, text centered, text width=4.5cm,  font=\normalsize},
   myarrow/.style = {line width=0.2pt, draw=black, -triangle 60, postaction={draw, line width=0.2pt, shorten >=10pt,-}}
}
\tikzstyle{arrow} = [->, >=stealth]
\newcommand{\x}{\boldsymbol{x}}
\newcommand{\z}{\boldsymbol{z}}
\newcommand{\I}{\boldsymbol{I}}
\newcommand{\rev}[1]{{#1}}
\newcommand{\revb}[1]{{#1}}
\newcommand{\revc}[1]{{#1}}
\numberwithin{equation}{section} 
\begin{document}

\begin{frontmatter}


 \title{\rev{Sufficient Conditions for Error Distance Reduction in the \(\ell^2\)-norm Trust Region between Minimizers of Local Nonconvex Multivariate Quadratic Approximates}}
 \author{Pengcheng Xie\corref{cor1}}
 \ead{pxie@lbl.gov}
\affiliation{organization={Applied Mathematics and Computational Research Division, Lawrence Berkeley National Laboratory},
            addressline={1 Cyclotron Road}, 
            city={Berkeley},
            postcode={94720}, 
            state={CA},
            country={USA}}

\begin{abstract}
This paper analyzes the sufficient conditions for distance reduction between minimizers of \rev{local {\rev{nonconvex}} quadratic approximate functions with}  diagonal Hessian in the $\ell^2$-norm trust regions after two iterations. \rev{Some examples illustrate the theoretical
results of this study.} 

\end{abstract}


\begin{keyword} 
quadratic \sep trust-region \sep distance reduction



\MSC 65G99 \sep 65K05 \sep 90C30


\end{keyword}

\end{frontmatter}


\section{Introduction}\label{Introduction}

Goldfeld, Quandt, and Trotter \cite{goldfeld1966maximization} made a crucial advancement in the trust-region algorithm \cite{conn2000trust,yuan2015recent} in 1966 by introducing an explicit procedure to update the maximum step size. Although it is unclear from their paper whether they considered this parameter as Hessian damping, which imposes a restriction on step size, or as a step size restriction calculated by damping the Hessian, their update procedure closely resembles the one currently used in trust-region algorithms. The concept of ``achieved versus predicted change'' was also introduced, which compares the actual reduction in the objective function with the reduction predicted by the quadratic approximation. The proposed method in 1966 utilizes the same quadratic approximation as Newton's Method but with a damping parameter in the Hessian matrix that limits the step size. This damping parameter is adjusted based on the accuracy of the quadratic approximation to increase step size in areas of good approximation and decrease it in areas of poor approximation.

\begin{sloppypar}
The trust-region method obtains the next iteration point by solving the subproblem 
\begin{equation*}
\begin{aligned}
\min_{\x \in {\revc{\mathbb{R}}}^n} \ & m(\x)\\
\text { subject to }&\|\x-\x_c\|_2 \leq \Delta_k,
\end{aligned}
\end{equation*}
where \(\x_c\in\mathbb{R}^n\) is the center of the \(\ell^2\) trust region \revc{\(\mathcal{B}(\x_c,\Delta_k)=\{\z\in\mathbb{R}^n, \left\|\z-\x_c\right\|_2\le \Delta_k\}\)}, \(\Delta_k>0\) is a trust-region radius for any \(k\), and \(\rev{m: \mathbb{R}^n\rightarrow \mathbb{R}}\) here is a \rev{local quadratic approximate function} of the objective function we need to minimize. \revb{Notice that the aim of solving the trust-region subproblem in such methods is to find a better iteration point with a lower objective function value within the corresponding trust region by using the approximate function. Such a region is determined iteratively according to the function value at the obtained trial point to ensure that the approximate function is numerically accurate enough at each iteration \cite{conn2000trust}. Ultimately, we will obtain a numerical approximate  minimizer of the objective function in the case without any constraints. Here, the solution of the trust-region subproblem is an approximate point to the minimizer of the objective function within the same trust region.} 
Therefore, a \rev{local quadratic approximate function}  is important for giving the next iteration point. This paper discusses the {\rev{nonconvex}} case. \revb{It considers the distance of minimizers of two nonconvex quadratic functions in the corresponding trust regions. One of the motivation is that local quadratic approximate functions are used to provide an approximate minimizer when solving problems using trust-region methods.}  
\end{sloppypar}

\revb{This paper will give the condition where there is a reduction of the distance between the two minimizers of such two local quadratic approximate functions \(f\) and \(Q\).}  
The results are helpful for iteratively modifying the \rev{local quadratic approximate function}s or dealing with the choice of the \rev{local quadratic approximate function}s for derivative-based or derivative-free trust-region methods \cite{conn2009introduction,Powell2003,Xie_Yuan_2022,xie2023derivativefree,xieyuansusdtr,xie2003jpcs}. Besides, we use the examples to show that our results are applicable. For example, we can directly use such conditions to tell whether the two different \rev{local quadratic approximate function}s can provide a reduction of the distance between the two minimizers after \revb{an iteration step}. 


{Notice that the quadratic functions \(f\) and \(Q\) refer to the  \rev{local quadratic approximate} functions in trust-region algorithms. \revc{One should mention that \(f\) is not the original objective function, although it can be in cases where we want to minimize a quadratic function.  The quadratic functions \(f\) and \(Q\) are both \rev{local quadratic approximate function}s appearing in the trust-region subproblem}.}
These values can be chosen such that when two trust-region steps are executed starting from $\x_{0}$, the distance between the trust-region solutions referring to the functions $f$ and $Q$ obtained at the second step is smaller than or equal to the distance between the two solutions obtained at the first step.

In a word, the distance between the minimizers of two {\rev{nonconvex}} quadratic functions in the corresponding trust regions will reduce in some cases. This paper derives the sufficient conditions for such cases.  

{\bf Notation.} 
In the following, we suppose that  \rev{$\x_{1}, \tilde{\x}_{1}\in \revc{\mathbb{R}}^{n}$} are respectively the minimizers of the {\rev{nonconvex}} \rev{multivariate} quadratic functions $f$ and $Q$ (with \(n\) variables) in the trust region \revc{$\mathcal{B}(\x_{0},\Delta_{1})$} and \revc{$\mathcal{B}(\x_{0},\tilde{\Delta}_{1})$}, and  $\x_{2}$ and $\tilde{\x}_{2}$ are respectively the minimizers of $f$ and $Q$ in the trust region \revc{$\mathcal{B}(\x_{1},\Delta_2)$} and \revc{$\mathcal{B}(\tilde{\x}_{1},\tilde{\Delta}_2)$}, where \(\x_0\in\revc{\mathbb{R}}^{n}\) is the initial point (or the center of the first trust region), and \rev{\(\Delta_1, \tilde{\Delta}_1, \Delta_2, \tilde{\Delta}_2\in \revc{\mathbb{R}}^{+}\)} are the trust-region radii. In other words, it holds that there exist real parameters $\omega_1,\tilde{\omega}_1,{\omega}_2,\tilde{\omega}_2>0$ such that 
\begin{equation}
\label{KKT-lambda-1-2-k-1}
\left\{
\begin{aligned}
&\x_{1}-\x_{0}=-\left(\nabla^{2} f+\omega_{1} \boldsymbol{I}\right)^{-1} \nabla f\left(\x_{0}\right),\\
&\tilde{\x}_{1}-\x_{0}=-(\nabla^{2}Q+\tilde{\omega}_1 \boldsymbol{I})^{-1} \nabla Q\left(\x_{0}\right), 
\end{aligned}
\right.
\end{equation}
and
\begin{equation*}
\label{KKT-lambda-3-4-k-1}
\left\{
\begin{aligned}
&\x_{2}-\x_{1}=-\left(\nabla^{2} f+{\omega}_2 \boldsymbol{I}\right)^{-1} \nabla f\left(\x_{1}\right),\\
&\tilde{\x}_{2}-\tilde{\x}_{1}=-(\nabla^{2}Q+\tilde{\omega}_2 \boldsymbol{I})^{-1} \nabla Q\left(\tilde{\x}_{1}\right),
\end{aligned}
\right.
\end{equation*}
where 
\( 
\Delta_{1}=\Vert \x_{1}-\x_{0} \Vert_2,\ 
\tilde{\Delta}_{1}=\Vert \tilde{\x}_{1}-\x_{0} \Vert_2,\  
\Delta_{2}=\Vert \x_{2}-\x_{1} \Vert_2,\ 
\tilde{\Delta}_{2}=\Vert \tilde{\x}_{2}-\x_{1} \Vert_2,
\) 
and 
\(\nabla^2 f+\omega_1\boldsymbol{I}\succeq \boldsymbol{0},  \nabla^2 Q+\tilde{\omega}_1\boldsymbol{I}\succeq \boldsymbol{0}, \nabla^2 f+{\omega}_2\boldsymbol{I}\succeq \boldsymbol{0},  \nabla^2 Q+\tilde{\omega}_2\boldsymbol{I}\succeq \boldsymbol{0}\).


\begin{assumption}

\label{big-assumption-2-k-1}

Suppose that \(f\) and $Q$ are {\rev{nonconvex}}  quadratic functions, \(\nabla^2 f+{\omega}_2\succ \boldsymbol{0},  \nabla^2 Q+\tilde{\omega}_2\succ \boldsymbol{0}\), and $\tilde{\x}_1\ne {\x}_{1}$. 

\end{assumption}

\begin{remark}
\rev{We use the same notations for different dimensions of cases for clearness and simplicity purposes.} 
\end{remark}

\rev{
The question we are discussing is to know if there exists under Assumption \ref{big-assumption-2-k-1} any sufficient condition of local approximates \(f\) and \(Q\) for 
\begin{equation}
\label{<q}
\left \Vert \tilde{\x}_{2}-\x_{2}\right\Vert_2 \le \revc{\rho}\Vert \tilde{\x}_1-{\x}_{1} \Vert_2, 
\end{equation} 
where \(0\le \rho\le 1\). } 



\section[Error distance analysis of approximate minimizers]{\rev{Error distance analysis of approximate minimizers}}

\subsection{\rev{Error distance between minimizers}}

{To observe the advantages of considering the optimality when constructing the \rev{local quadratic approximate function} for trust-region methods, we discuss the \rev{error distance} between minimizers of quadratic functions.}

\begin{prop}
The \rev{gap between minimizers} satisfies that 
\begin{equation*}
\tilde{\x}_{2}-\x_{2} =\tilde{\omega}_1\left(\nabla^{2} Q+\tilde{\omega}_2 \boldsymbol{I}\right)^{-1}  (\tilde{\x}_{1}-\x_{0})-\omega_1\left(\nabla^{2} f+{\omega}_2 \boldsymbol{I}\right)^{-1} (\x_{1}-\x_{0})+(\tilde{\x}_1-{\x}_{1}).
\end{equation*}

\end{prop}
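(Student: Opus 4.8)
The plan is to exploit the fact that $f$ and $Q$ are quadratic, so that their gradients are affine, in order to eliminate the gradients evaluated at the first-step minimizers $\x_1$ and $\tilde{\x}_1$ and re-express the second-step conditions purely in terms of the first-step displacements $\x_1-\x_0$ and $\tilde{\x}_1-\x_0$.

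First I would expand $\nabla f(\x_1)$ using the affine gradient identity $\nabla f(\x_1)=\nabla f(\x_0)+\nabla^2 f\,(\x_1-\x_0)$, which holds exactly because $f$ is quadratic. Substituting $\nabla f(\x_0)=-\left(\nabla^2 f+\omega_1\I\right)(\x_1-\x_0)$, read off from the first relation in \eqref{KKT-lambda-1-2-k-1}, makes the $\nabla^2 f$ terms cancel and leaves the clean identity $\nabla f(\x_1)=-\omega_1(\x_1-\x_0)$. The same computation applied to $Q$ together with the second relation in \eqref{KKT-lambda-1-2-k-1} yields $\nabla Q(\tilde{\x}_1)=-\tilde{\omega}_1(\tilde{\x}_1-\x_0)$.

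Next I would insert these two expressions into the second-step conditions \eqref{KKT-lambda-3-4-k-1}, obtaining $\x_2-\x_1=\omega_1\left(\nabla^2 f+\omega_2\I\right)^{-1}(\x_1-\x_0)$ and $\tilde{\x}_2-\tilde{\x}_1=\tilde{\omega}_1\left(\nabla^2 Q+\tilde{\omega}_2\I\right)^{-1}(\tilde{\x}_1-\x_0)$. Writing the target quantity as $\tilde{\x}_2-\x_2=(\tilde{\x}_2-\tilde{\x}_1)-(\x_2-\x_1)+(\tilde{\x}_1-\x_1)$ and substituting the two displayed displacements then assembles the three terms of the claimed formula directly.

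The only real content is the cancellation that produces $\nabla f(\x_1)=-\omega_1(\x_1-\x_0)$; everything after that is substitution and sign bookkeeping in the subtraction. I expect the main thing to verify carefully is that the shifted Hessians $\nabla^2 f+\omega_2\I$ and $\nabla^2 Q+\tilde{\omega}_2\I$ are invertible, so that the second-step displacements are well defined — this is precisely guaranteed by the positive-definiteness hypotheses in Assumption \ref{big-assumption-2-k-1}.
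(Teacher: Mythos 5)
Your proof is correct and follows essentially the same route as the paper: both exploit the affine gradient identity $\nabla f(\x_1)=\nabla f(\x_0)+\nabla^2 f\,(\x_1-\x_0)$ together with the first-step stationarity conditions to get $\nabla f(\x_1)=-\omega_1(\x_1-\x_0)$ and $\nabla Q(\tilde{\x}_1)=-\tilde{\omega}_1(\tilde{\x}_1-\x_0)$, then assemble the result via the decomposition $\tilde{\x}_2-\x_2=(\tilde{\x}_2-\tilde{\x}_1)-(\x_2-\x_1)+(\tilde{\x}_1-\x_1)$. If anything, your write-up makes the final telescoping step more explicit than the paper's closing sentence does.
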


\begin{proof} 
It holds that 
\begin{equation*}
\begin{aligned}
\x_{2}-{\x}_{1} &=-\left(\nabla^{2} f+{\omega}_2 \boldsymbol{I}\right)^{-1} \nabla f\left(\x_{1}\right)\\
&{=-\left(\nabla^{2} f+{\omega}_2 \boldsymbol{I}\right)^{-1}\left(\nabla f(\x_{0})+\nabla^{2} f \cdot (\x_{1}-\x_{0})\right) }\\
&{=-\left(\nabla^{2} f+{\omega}_2 \boldsymbol{I}\right)^{-1}\left(-\left(\nabla^{2} f+\omega_{1} \boldsymbol{I}\right)(\x_{1}-\x_{0})+\nabla^{2} f \cdot (\x_{1}-\x_{0})\right) }\\
&=\omega_1\left(\nabla^{2} f+{\omega}_2 \boldsymbol{I}\right)^{-1}  (\x_{1}-\x_{0}),
\end{aligned}
\end{equation*} 
\revc{and}
\begin{equation*}
\begin{aligned}
\tilde{\x}_{2}-\tilde{\x}_1=\tilde{\omega}_1\left(\nabla^{2} Q+\tilde{\omega}_2 \boldsymbol{I}\right)^{-1}  (\tilde{\x}_{1}-\x_{0}),
\end{aligned}
\end{equation*}
according to the relationship of $\nabla f(\x_{0})$, $\nabla f(\x_{1})$, $\nabla f(\tilde{\x}_{1})$ and (\ref{KKT-lambda-1-2-k-1}). 
Then the conclusion can be  proved directly according to the inequality of the norm. 
\end{proof}

\begin{theorem}[sufficient and necessary condition for 1-dimensional case]
\label{thm-1-1}
{Suppose that Assumption \ref{big-assumption-2-k-1} holds, the dimension \(n=1\) and \(\kappa :=\frac{{\x}_{1}-\x_{0}}{\tilde{\x}_1-{\x}_{1}}\in \revc{\mathbb{R}}\). Then (\ref{<q}) holds for \(0\le \revc{\rho}\le 1\) if and only if 
\begin{equation}
\label{proof-1-3-1}
\left\{
\begin{aligned}
&(\nabla^2 Q+\tilde{\omega}_2) \omega_1>(\nabla^2 f + {\omega}_2) \tilde{\omega}_1,\\
&\kappa_1\leq \kappa \leq \kappa_2, 
\end{aligned}
\right.
\end{equation}
or
\begin{equation}
\label{proof-1-3-2}
\left\{
\begin{aligned}
&(\nabla^2 Q+\tilde{\omega}_2) \omega_1<(\nabla^2 f + {\omega}_2) \tilde{\omega}_1, \\
&\kappa_2\leq \kappa \leq \kappa_1,
\end{aligned}
\right.
\end{equation} 
where
\begin{equation*}
\left\{
\begin{aligned}
&\kappa_1=\frac{ (\nabla^2 f + {\omega}_2)\left[(-\revc{\rho}+1)(\nabla^2 Q+\tilde{\omega}_2)+\tilde{\omega}_1\right]}{(\nabla^2 Q+\tilde{\omega}_2) \omega_1-(\nabla^2 f + {\omega}_2) \tilde{\omega}_1}, \\
&\kappa_2=\frac{ (\nabla^2 f + {\omega}_2)\left[(\revc{\rho}+1)(\nabla^2 Q+\tilde{\omega}_2)+\tilde{\omega}_1\right]}{(\nabla^2 Q+\tilde{\omega}_2) \omega_1-(\nabla^2 f + {\omega}_2) \tilde{\omega}_1}
\end{aligned}
\right.
\end{equation*}
holds. 
}

\end{theorem}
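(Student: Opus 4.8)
The plan is to specialize the Proposition to $n=1$ and reduce (\ref{<q}) to a single affine inequality in the scalar $\kappa$. When $n=1$ every Hessian is a scalar and every matrix inverse is an ordinary reciprocal, so the Proposition reads
\begin{equation*}
\tilde{\x}_2-\x_2=\frac{\tilde{\omega}_1}{\nabla^2 Q+\tilde{\omega}_2}(\tilde{\x}_1-\x_0)-\frac{\omega_1}{\nabla^2 f+\omega_2}(\x_1-\x_0)+(\tilde{\x}_1-\x_1).
\end{equation*}
First I would substitute $\tilde{\x}_1-\x_0=(\tilde{\x}_1-\x_1)+(\x_1-\x_0)$ and divide throughout by $\tilde{\x}_1-\x_1$, which is legitimate since $\tilde{\x}_1\neq\x_1$ by Assumption \ref{big-assumption-2-k-1}. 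Using $\kappa=(\x_1-\x_0)/(\tilde{\x}_1-\x_1)$ together with the resulting identity $(\tilde{\x}_1-\x_0)/(\tilde{\x}_1-\x_1)=1+\kappa$, this turns the ratio into an affine function of $\kappa$,
\begin{equation*}
\frac{\tilde{\x}_2-\x_2}{\tilde{\x}_1-\x_1}=\underbrace{\left(\frac{\tilde{\omega}_1}{\nabla^2 Q+\tilde{\omega}_2}-\frac{\omega_1}{\nabla^2 f+\omega_2}\right)}_{=:c}\kappa+\underbrace{\left(\frac{\tilde{\omega}_1}{\nabla^2 Q+\tilde{\omega}_2}+1\right)}_{=:d}.
\end{equation*}

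With this reduction, (\ref{<q}) is exactly $|c\kappa+d|\le\varepsilon$, i.e. $-\varepsilon-d\le c\kappa\le\varepsilon-d$. The key observation is that, since $\nabla^2 f+\omega_2>0$ and $\nabla^2 Q+\tilde{\omega}_2>0$ by Assumption \ref{big-assumption-2-k-1}, the common denominator $(\nabla^2 f+\omega_2)(\nabla^2 Q+\tilde{\omega}_2)$ of $c$ is positive, so $\operatorname{sign}(c)=\operatorname{sign}\big((\nabla^2 f+\omega_2)\tilde{\omega}_1-(\nabla^2 Q+\tilde{\omega}_2)\omega_1\big)$. Thus $c<0$ is equivalent to the strict inequality in (\ref{proof-1-3-1}) and $c>0$ to the strict inequality in (\ref{proof-1-3-2}), which is precisely the dichotomy appearing in the statement.

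Next I would solve the two-sided linear inequality. When $c<0$, dividing by $c$ reverses the inequalities and gives $(\varepsilon-d)/c\le\kappa\le(-\varepsilon-d)/c$; a direct simplification—clearing the factor $\nabla^2 Q+\tilde{\omega}_2$ from $d$ and flipping the sign of the denominator $(\nabla^2 f+\omega_2)\tilde{\omega}_1-(\nabla^2 Q+\tilde{\omega}_2)\omega_1$—identifies the left endpoint with $\kappa_1$ and the right endpoint with $\kappa_2$, yielding $\kappa_1\le\kappa\le\kappa_2$. When $c>0$, dividing preserves the inequalities and the same two endpoints appear in the opposite order, giving $\kappa_2\le\kappa\le\kappa_1$. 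Since every step is an equivalence, this proves both directions of the ``if and only if''.

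The main obstacle I expect is purely the sign bookkeeping in the last step: one must check that the endpoints $(\pm\varepsilon-d)/c$ coincide exactly with the stated $\kappa_1,\kappa_2$ in each sign regime, and that the ordering of the interval flips correctly between the two cases. A small additional point is the degenerate case $c=0$, i.e. $(\nabla^2 Q+\tilde{\omega}_2)\omega_1=(\nabla^2 f+\omega_2)\tilde{\omega}_1$: there the ratio collapses to the constant $d=1+\tilde{\omega}_1/(\nabla^2 Q+\tilde{\omega}_2)>1\ge\varepsilon$, so $|c\kappa+d|\le\varepsilon$ fails for every $\kappa$. This confirms that the two strict-inequality cases in (\ref{proof-1-3-1})--(\ref{proof-1-3-2}) exhaust all situations in which (\ref{<q}) can hold, so no feasible $\kappa$ is missed.
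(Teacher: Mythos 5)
Your proposal is correct and follows essentially the same route as the paper: both reduce (\ref{<q}) to the scalar inequality \(\bigl\vert 1+\tilde{\omega}_1(1+\kappa)/(\nabla^2Q+\tilde{\omega}_2)-\omega_1\kappa/(\nabla^2 f+{\omega}_2)\bigr\vert\le\varepsilon\) and then solve this affine inequality in \(\kappa\), which the paper dismisses as ``basic computation.'' Your write-up simply fills in the details the paper omits --- the sign analysis of the coefficient \(c\), the identification of the interval endpoints with \(\kappa_1,\kappa_2\), and the degenerate case \((\nabla^2 Q+\tilde{\omega}_2)\omega_1=(\nabla^2 f+{\omega}_2)\tilde{\omega}_1\), where (\ref{<q}) indeed cannot hold.
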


\begin{proof}

Condition (\ref{proof-1-3-1}) or (\ref{proof-1-3-2}) holds if and only if 
\begin{equation*}
\begin{aligned}
\left\|\tilde{\x}_{2}-\x_{2}\right\|_2 & \leq\left\vert 1+\frac{\tilde{\omega}_1(1+\kappa)}{\nabla^2Q+\tilde{\omega}_2}-\frac{\omega_1 \kappa}{\nabla^2 f+{\omega}_2}\right\vert \left\|\tilde{\x}_1-{\x}_{1}\right\|_2 \leq\revc{\rho}\left\|\tilde{\x}_1-{\x}_{1}\right\|_2, 
\end{aligned}
\end{equation*}
based on the basic computation. 
Therefore we prove the conclusion.  
\end{proof}

\begin{coro}
\label{noepsiloncorollary}
Suppose that Assumption \ref{big-assumption-2-k-1} holds, the dimension of the problem \(n=1\)\revc{,} and \(\kappa :=\frac{{\x}_{1}-\x_{0}}{\tilde{\x}_1-{\x}_{1}}\in \revc{\mathbb{R}}\). If \(-1<\kappa<0\), i.e., \(
\tilde{\x}_1\le \x_{0}<{\x}_{1} \text{ or } {\x}_1< \x_{0}\le \tilde{\x}_1,
\) 
then there does not exist an $0<\revc{\rho}<1$ such that (\ref{<q}) holds. 

\end{coro}

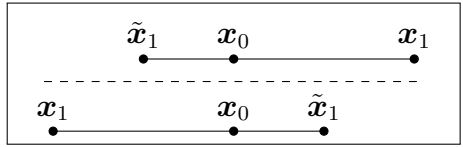
\begin{figure}[H]
\centering
\fbox{
\begin{tikzpicture}[scale=1.2]
\coordinate (A) at (0,0);
\coordinate (B) at (1,0);
\coordinate (C) at (3,0);
\filldraw (A) circle (0.045) node[above]{$\tilde{\boldsymbol{x}}_{1}$};
\filldraw (B) circle (0.045) node[above]{$\boldsymbol{x}_{0}$};
\filldraw (C) circle (0.045) node[above]{$\boldsymbol{x}_{1}$};
\draw (A) -- (B);
\draw (B) -- (C);

\draw[dashed] (-1.1,-0.25) -- (3.1,-0.25);

\coordinate (D) at (-1,-0.8);
\coordinate (E) at (1,-0.8);
\coordinate (F) at (2,-0.8);
\filldraw (D) circle (0.045) node[above]{${\boldsymbol{x}}_{1}$};
\filldraw (E) circle (0.045) node[above]{$\boldsymbol{x}_{0}$};
\filldraw (F) circle (0.045) node[above]{$\tilde{\boldsymbol{x}}_{1}$};
\draw (D) -- (E);
\draw (E) -- (F);

\end{tikzpicture}
}
\caption{Distribution of \(\x_{0}, {\x}_{1}, \tilde{\x}_1\) corresponding to Corollary \ref{noepsiloncorollary}\label{figure1label}}
\end{figure}

\begin{proof}
Given $\omega_1 > 0, \
\tilde{\omega}_1 > 0, \
G > 0, \
H > 0, \
\revc{\rho}>0, \
-1 \leq \kappa \leq 0$, it holds that 
\begin{align*}
-\revc{\rho} \le 1 + \frac{\tilde{\omega}_1(1+\kappa)}{G} - \frac{\omega_1\kappa}{H}  &\leq \revc{\rho} 
\end{align*}
is equivalent with 
\begin{equation}
\label{-1<kappa<0}
\revc{\rho} \geq \frac{G H-G \kappa  \omega_1+H \kappa  \tilde{\omega}_1+H \tilde{\omega}_1}{G H}\ge 1. 
\end{equation}
Hence the conclusion is proved according to (\ref{-1<kappa<0}). 
\end{proof}

\begin{remark}
\rev{Fig.} \ref{figure1label} \revc{shows the cases in Corollary \ref{noepsiloncorollary}.} 
\end{remark}

\begin{theorem}[sufficient condition for general $n$-dimensional diagonal Hessian case]
\label{sufficient_condition_general_n}
\begin{sloppypar}
{Suppose that Assumption \ref{big-assumption-2-k-1} holds and \rev{\(
\boldsymbol{\kappa}:=\operatorname{diag}\left(\kappa^{[1]}, \kappa^{[2]}, \ldots, \kappa^{[n]}\right)\in\revc{\mathbb{R}}^{n\times n}
\)}  
 satisfies that
\(
\boldsymbol{\kappa}\left(\tilde{\x}_1-{\x}_{1}\right)={\x}_{1}-\x_{0}
\). If for any given \(i\in\{1,2,\dots,n\}\), it holds that 
\begin{equation*} 
\left\{
\begin{aligned} 
&(\nabla^2 Q^{[i]}+\tilde{\omega}_2) \omega_1>(\nabla^2 f^{[i]} + {\omega}_2) \tilde{\omega}_1,\\
&\kappa_1^{[i]}\leq \kappa^{[i]} \leq \kappa_2^{[i]},
\end{aligned}
\right.
\end{equation*}
or
\begin{equation*} 
\left\{
\begin{aligned} 
&(\nabla^2 Q^{[i]}+\tilde{\omega}_2) \omega_1<(\nabla^2 f^{[i]} + {\omega}_2) \tilde{\omega}_1,\\
&\kappa_2^{[i]}\leq \kappa^{[i]} \leq \kappa_1^{[i]},
\end{aligned}
\right.
\end{equation*}
where
\begin{equation*}
\left\{
\begin{aligned}
&\kappa_1^{[i]}=\frac{ (\nabla^2 f^{[i]} + {\omega}_2)\left[(-\revc{\rho}+1)(\nabla^2 Q^{[i]}+\tilde{\omega}_2)+\tilde{\omega}_1\right]}{(\nabla^2 Q^{[i]}+\tilde{\omega}_2) \omega_1-(\nabla^2 f^{[i]} + {\omega}_2) \tilde{\omega}_1},
\\ 
&\kappa_2^{[i]}=\frac{ (\nabla^2 f^{[i]} + {\omega}_2)\left[(\revc{\rho}+1)(\nabla^2 Q^{[i]}+\tilde{\omega}_2)+\tilde{\omega}_1\right]}{(\nabla^2 Q^{[i]}+\tilde{\omega}_2) \omega_1-(\nabla^2 f^{[i]} + {\omega}_2) \tilde{\omega}_1}, 
\end{aligned}
\right.
\end{equation*}
holds, then (\ref{<q}) holds, where the superscript $[i]$ denotes the $i$-th diagonal element of the corresponding matrix \(\nabla^2 f\) or \(\nabla^2 Q\), or the $i$-th element of the correpsonding vectors \(\kappa_1\) and \(\kappa_2\).
}
\end{sloppypar}

\end{theorem}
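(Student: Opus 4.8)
The plan is to exploit the diagonal structure of the Hessians, which decouples the single vector identity from the Proposition into $n$ independent scalar relations, each of which reproduces exactly the one-dimensional situation already settled in Theorem \ref{thm-1-1}. First I would start from the identity in the Proposition and note that, since $\nabla^2 f$ and $\nabla^2 Q$ are diagonal, the matrices $(\nabla^2 Q+\tilde{\omega}_2\I)^{-1}$ and $(\nabla^2 f+{\omega}_2\I)^{-1}$ act coordinatewise. Reading off the $i$-th coordinate and substituting the two relations $(\x_1-\x_0)^{[i]}=\kappa^{[i]}(\tilde{\x}_1-\x_1)^{[i]}$ and $(\tilde{\x}_1-\x_0)^{[i]}=(1+\kappa^{[i]})(\tilde{\x}_1-\x_1)^{[i]}$, both of which are immediate from $\boldsymbol{\kappa}(\tilde{\x}_1-\x_1)=\x_1-\x_0$, the $i$-th coordinate factors as
\[
(\tilde{\x}_2-\x_2)^{[i]}=c_i\,(\tilde{\x}_1-\x_1)^{[i]},\qquad c_i:=1+\frac{\tilde{\omega}_1(1+\kappa^{[i]})}{\nabla^2 Q^{[i]}+\tilde{\omega}_2}-\frac{\omega_1\kappa^{[i]}}{\nabla^2 f^{[i]}+{\omega}_2}.
\]

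The second step is to reuse the scalar computation underlying Theorem \ref{thm-1-1} on each coordinate separately. For fixed $i$, the two displayed hypotheses, namely the sign condition on $(\nabla^2 Q^{[i]}+\tilde{\omega}_2)\omega_1-(\nabla^2 f^{[i]}+{\omega}_2)\tilde{\omega}_1$ together with the interval $\kappa_1^{[i]}\le\kappa^{[i]}\le\kappa_2^{[i]}$ (or its reversal), are precisely the conditions equivalent to $|c_i|\le\varepsilon$. This is the same linear-in-$\kappa$ inequality $-\varepsilon\le c_i\le\varepsilon$ that is solved in the proof of Theorem \ref{thm-1-1}, with the orientation of the interval determined by the sign of the coefficient of $\kappa^{[i]}$ in $c_i$. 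Hence the hypotheses deliver $|c_i|\le\varepsilon$ for every $i\in\{1,2,\dots,n\}$.

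The final step is to aggregate the componentwise bounds into the $\ell^2$ estimate. Since the weights $\big[(\tilde{\x}_1-\x_1)^{[i]}\big]^2$ are nonnegative and sum to $\|\tilde{\x}_1-\x_1\|_2^2$, the uniform bound $\max_i|c_i|\le\varepsilon$ propagates through the sum of squares:
\[
\|\tilde{\x}_2-\x_2\|_2^2=\sum_{i=1}^{n}c_i^2\big[(\tilde{\x}_1-\x_1)^{[i]}\big]^2\le\varepsilon^2\sum_{i=1}^{n}\big[(\tilde{\x}_1-\x_1)^{[i]}\big]^2=\varepsilon^2\|\tilde{\x}_1-\x_1\|_2^2,
\]
and taking square roots yields (\ref{<q}).

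I expect the main obstacle to be conceptual rather than computational. The per-coordinate bound $|c_i|\le\varepsilon$ is strictly stronger than what (\ref{<q}) demands, because the $\ell^2$ inequality can still hold even if some $|c_i|>\varepsilon$, provided the corresponding weight $\big[(\tilde{\x}_1-\x_1)^{[i]}\big]^2$ is small enough. This gap is exactly why the general-$n$ statement is only sufficient, and not sufficient-and-necessary as in the scalar case of Theorem \ref{thm-1-1}; the only care required is to verify that requiring the bound on every coordinate is enough to control the weighted sum, which it is by the monotone aggregation displayed above.
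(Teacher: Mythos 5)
Your proposal is correct and follows essentially the same route as the paper's own proof: both exploit the diagonal Hessians to write $\tilde{\x}_2-\x_2$ as a diagonal matrix with entries $c_i$ acting on $\tilde{\x}_1-\x_1$, reduce the hypotheses to the per-coordinate bound $\vert c_i\vert\le\varepsilon$ via the scalar argument of Theorem \ref{thm-1-1}, and then pass to the $\ell^2$ bound. Your version merely spells out the sum-of-squares aggregation and the sufficiency-versus-necessity remark more explicitly than the paper does.
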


\begin{proof}

It holds that 
\[
\begin{aligned}
&\Vert \tilde{\x}_{2}-\x_{2}\Vert_2\\
=&\left \Vert \left(\I+\tilde{\omega}_1\left(\nabla^2 Q+\tilde{\omega}_2 \I\right)^{-1}(\I+\boldsymbol{\kappa})-\omega_1\left(\nabla^2 f+{\omega}_2 \I\right)^{-1} \boldsymbol{\kappa}\right) (\tilde{\x}_{1}-\x_{1})\right \Vert_2\\
\le& 
\left \Vert \revc{\rho} (\tilde{\x}_{1}^{[1]}-\x_{1}^{[1]},\dots,\tilde{\x}_{1}^{[n]}-\x_{1}^{[n]})^{\top}  \right \Vert_2= 
\revc{\rho} \Vert \tilde{\x}_{1}-{\x}_{1} \Vert_2,
\end{aligned}
\]
where the superscript $[i]$ denotes the $i$-th element of the corresponding vector, since 
\begin{equation*}
\begin{aligned}
\left\vert 1+\tilde{\omega}_1 \frac{1+\kappa^{[i]}}{\nabla^2 Q^{[i]}+\tilde{\omega}_2}-\omega_1 \frac{\kappa^{[i]}}{\nabla^2 f^{[i]}+{\omega}_2}\right\vert \leq \revc{\rho}, \ \forall\ i=1,\dots,n. 
\end{aligned}
\end{equation*} 
\revc{Then the conclusion is proved based on the above.} 
\end{proof}

\begin{coro}
\label{noepsiloncorollaryndim}
\begin{sloppypar}
Suppose that Assumption \ref{big-assumption-2-k-1} holds, and \rev{\(
\boldsymbol{\kappa}:=\operatorname{diag}\left(\kappa^{[1]}, \kappa^{[2]}, \ldots, \kappa^{[n]}\right)\in\revc{\mathbb{R}}^{n\times n}
\)} 
 satisfies that
\(
\kappa\left(\tilde{\x}_1-{\x}_{1}\right)={\x}_{1}-\x_{0}
\). If \(-1<\kappa^{[i]}<0\) for \(\forall\ i\), 
i.e., 
\(
\tilde{\x}_1^{[i]}\le \x_{0}^{[i]}<{\x}_{1}^{[i]} \text{ or } {\x}_1^{[i]}< \x_{0}^{[i]}\le \tilde{\x}_1^{[i]},
\) 
then there does not exist an $0<\revc{\rho}<1$ such that (\ref{<q}) holds. 
\end{sloppypar}
\end{coro}

\begin{figure}[htbp]
\centering
\fbox{
\begin{tikzpicture}[scale=1]
\coordinate (A) at (0,-1);
\coordinate (B) at (1,0);
\coordinate (C) at (3,2);
\filldraw (A) circle (0.0695) node[above]{$\tilde{\boldsymbol{x}}_{1}$};
\filldraw (B) circle (0.0695) node[above]{$\boldsymbol{x}_{0}$};
\filldraw (C) circle (0.0695) node[above]{$\boldsymbol{x}_{1}$};
\draw (A) -- (B);
\draw (B) -- (C);

\draw[dashed] (3.75,2.5) -- (3.75,-1.7);

\coordinate (D) at (4.5,-2);
\coordinate (E) at (6.5,-0);
\coordinate (F) at (7.5,1);
\filldraw (D) circle (0.0695) node[above]{${\boldsymbol{x}}_{1}$};
\filldraw (E) circle (0.0695) node[above]{$\boldsymbol{x}_{0}$};
\filldraw (F) circle (0.0695) node[above]{$\tilde{\boldsymbol{x}}_{1}$};
\draw (D) -- (E);
\draw (E) -- (F);

\end{tikzpicture}
}
\caption{Distribution of \(\x_{0}, {\x}_{1}, \tilde{\x}_1\) corresponding to Corollary \ref{noepsiloncorollaryndim}\label{figure2label}}
\end{figure}
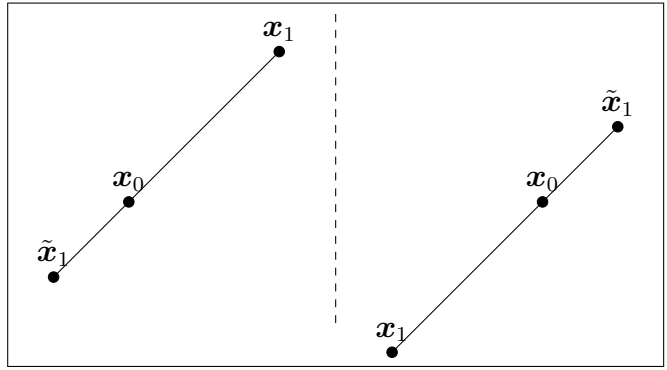

\begin{proof}
The conclusion can be directly derived based on the conclusion of each element of \(\tilde{\x}_{2}-\x_{2}\) from the proof of Corollary \ref{noepsiloncorollary}. 
\end{proof}

\begin{remark}
\rev{Fig.} \ref{figure2label} shows the cases in Corollary \ref{noepsiloncorollaryndim}. 
\end{remark}

\section{\rev{Numerical examples}}

\rev{We present the following examples to illustrate our results.}  

\begin{example}
\revc{In this example, we show the case where dimension $n=2$, \rev{local quadratic approximate function}  has diagonal Hessian, and $\boldsymbol{\kappa}$ has different non-zero components.} 
\[
\begin{aligned}
&\left\{\begin{aligned}
&f(\x)=-\frac{1}{2} \x^{\top}
\begin{pmatrix}
1 & 0 \\ 
0 & 2
\end{pmatrix}
 \x+\left( 
\frac{1}{7}, \frac{5}{3}
\right) \x, \\
 &Q(\x)=-\frac{1}{2} \x^{\top}
 \begin{pmatrix}
 1 & 0 \\
  0 & 1
  \end{pmatrix}
 \x.
  \end{aligned}\right.
 \end{aligned}
  \]
Besides, 
\(
\x_0=(1, 1)^{\top},\ \omega_1=3,\ \tilde{\omega}_1=3,\ {\omega}_2=4,\ \tilde{\omega}_2=5,\ \revc{\rho}=\frac{1}{2 }.
\) 
We have 
\begin{equation*}
\x_1=\x_0-\left(\nabla^2 f+\omega_1 \I\right)^{-1} \nabla f 
=\begin{pmatrix}\frac{10}{7} \\ \frac{4}{3}\end{pmatrix},\ 
\tilde{\x}_1=\x_0-\left(\nabla^2 Q+\tilde{\omega}_1 \I\right)^{-1} \nabla Q 
=\begin{pmatrix}\frac{3}{2} \\ \frac{3}{2}\end{pmatrix},
\end{equation*}
and  
\[
\boldsymbol{\kappa}=\left(\begin{array}{cc}\frac{\frac{10}{7}-1}{\frac{3}{2}-\frac{10}{7}} & 0 \\ 0 & \frac{\frac{4}{3}-1}{\frac{3}{2}-\frac{4}{3}}\end{array}\right)=\left(\begin{array}{cc}6 & 0 \\ 0 & 2\end{array}\right). 
\]

It holds that
\begin{equation*}
\left\{
\begin{aligned} 
&(\nabla^2 Q^{[1]}+\tilde{\omega}_2) \omega_1=12>9=(\nabla^2 f^{[1]} + {\omega}_2) \tilde{\omega}_1,\\
&\kappa_1^{[1]}\leq \kappa^{[1]} \leq \kappa_2^{[1]},
\end{aligned}
\right.
\end{equation*}
and 
\begin{equation*} 
\left\{
\begin{aligned} 
&(\nabla^2 Q^{[2]}+\tilde{\omega}_2) \omega_1=12>6=(\nabla^2 f^{[2]} + {\omega}_2) \tilde{\omega}_1,\\
&\kappa_1^{[2]}\leq \kappa^{[2]} \leq \kappa_2^{[2]},
\end{aligned}
\right.
\end{equation*} 
where
\begin{equation*}
\left\{
\begin{aligned}
&\kappa_1^{[1]}=\frac{ (\nabla^2 f^{[1]} + {\omega}_2)\left[(-\revc{\rho}+1)(\nabla^2 Q^{[1]}+\tilde{\omega}_2)+\tilde{\omega}_1\right]}{(\nabla^2 Q^{[1]}+\tilde{\omega}_2) \omega_1-(\nabla^2 f^{[1]} + {\omega}_2) \tilde{\omega}_1}=-4\revc{\rho}+7=5, \\
&\kappa_2^{[1]}=\frac{ (\nabla^2 f^{[1]} + {\omega}_2)\left[(\revc{\rho}+1)(\nabla^2 Q^{[1]}+\tilde{\omega}_2)+\tilde{\omega}_1\right]}{(\nabla^2 Q^{[1]}+\tilde{\omega}_2) \omega_1-(\nabla^2 f^{[1]} + {\omega}_2) \tilde{\omega}_1}=4\revc{\rho}+7=9,
\end{aligned}
\right.
\end{equation*}
and 
\begin{equation*}
\left\{
\begin{aligned}
&\kappa_1^{[2]}=\frac{ (\nabla^2 f^{[2]} + {\omega}_2)\left[(-\revc{\rho}+1)(\nabla^2 Q^{[2]}+\tilde{\omega}_2)+\tilde{\omega}_1\right]}{(\nabla^2 Q^{[2]}+\tilde{\omega}_2) \omega_1-(\nabla^2 f ^{[2]}+ {\omega}_2) \tilde{\omega}_1}=-\frac{4}{3}\revc{\rho}+\frac{7}{3}=\frac{5}{3}, \\
&\kappa_2^{[2]}=\frac{ (\nabla^2 f^{[2]} + {\omega}_2)\left[(\revc{\rho}+1)(\nabla^2 Q^{[2]}+\tilde{\omega}_2)+\tilde{\omega}_1\right]}{(\nabla^2 Q^{[2]}+\tilde{\omega}_2) \omega_1-(\nabla^2 f^{[2]} + {\omega}_2) \tilde{\omega}_1}=\frac{4}{3}\revc{\rho}+\frac{7}{3}=3,
\end{aligned}
\right.
\end{equation*}
and thus it satisfies the sufficient condition. Besides, we obtain that  
\begin{align*}
&\tilde{\x}_{2}-\x_{2} \\
=&\tilde{\omega}_1\left(\nabla^{2} Q+\tilde{\omega}_2 \boldsymbol{I}\right)^{-1}  (\tilde{\x}_{1}-\x_{0})-\omega_1\left(\nabla^{2} f+{\omega}_2 \boldsymbol{I}\right)^{-1} (\x_{1}-\x_{0})+(\tilde{\x}_1-\x_1)\\
=&\begin{pmatrix}
\frac{1}{56} \\
\frac{1}{24}
\end{pmatrix},
\end{align*} 
and then 
\[
\Vert \tilde{\x}_2-\x_2\Vert_2=\frac{\sqrt{\frac{29}{2}}}{84}<\frac{1}{2}\frac{\sqrt{\frac{29}{2}}}{21}=\revc{\rho} \Vert \tilde{\x}_1-\x_1\Vert_2.
\]

\end{example}

\rev{The following example shows a numerical observation of the coefficients making the error distance reduced in the 1-dimensional case.} 

\begin{example}
\label{example-3.2}
\rev{We figure out the probability that the coefficients satisfy the conditions in Theorem \ref{thm-1-1} and illustrate the 1-dimensional case.} We perform a numerical experiment using the software Mathematica \rev{(Version 13.3)}\footnote{Codes are available in \href{https://github.com/PengchengXieLSEC/distance-reduction}{https://github.com/PengchengXieLSEC/distance-reduction}.} to compute the integral \rev{of a boolean expression giving the probability measure for different values} of $q$ and $\revc{\rho}$. 
Specifically, we compute the integral over ${\omega}_2$ and $\tilde{\omega}_2$ separately in the ranges \([0,q\omega_1]\) and \([0,q\tilde{\omega}_1]\), where \(q\) is a non-negative real coefficient. We then divided the result by $q^2 \omega_1 \tilde{\omega}_1$ to represent the probability, i.e., 
\begin{equation*}
\begin{aligned}
&\text{Prob}(\revc{\rho})\\
=&\frac{1}{q^2 {\omega_1} {\tilde{\omega}_1}}\int_0^{q {\omega_1}}\int _0^{q {\tilde{\omega}_1}}\text{Boole}\left[\nabla^2 Q+{\tilde{\omega}_2}\geq -\frac{(\kappa +1) {\tilde{\omega}_1} (\nabla^2 f+{{\omega}_2})}{(\revc{\rho}+1)  {({\omega}_2+\nabla^2 f)}-\kappa  {\omega_1}}\right] \\
&\text{Boole}\left[ \nabla^2 Q+{\tilde{\omega}_2}\leq \frac{(\kappa +1) {\tilde{\omega}_1} (\nabla^2 f+{{\omega}_2})}{(\revc{\rho} -1) {({\omega}_2+\nabla^2 f)}+\kappa  {\omega_1}}\right]d{\tilde{\omega}_2}d{{\omega}_2},
\end{aligned} 
\end{equation*}
where \(\text{Boole}(\cdot)\) denotes the 0/1-output Boolean function. 
Notice that in this example, we define the constants $\nabla^2 Q = -1$, $\nabla^2 f = -2$, $\omega_1 = 3$, $\tilde{\omega}_1 = 3$, $\kappa = -2$, and  
\(
q= 10^{-3}, 10^{-2}, 10^{-1}, 1, 10, 10^2, 10^3.
\)

\begin{figure}[htb]
  \centering
  \includegraphics[width=0.95\textwidth]{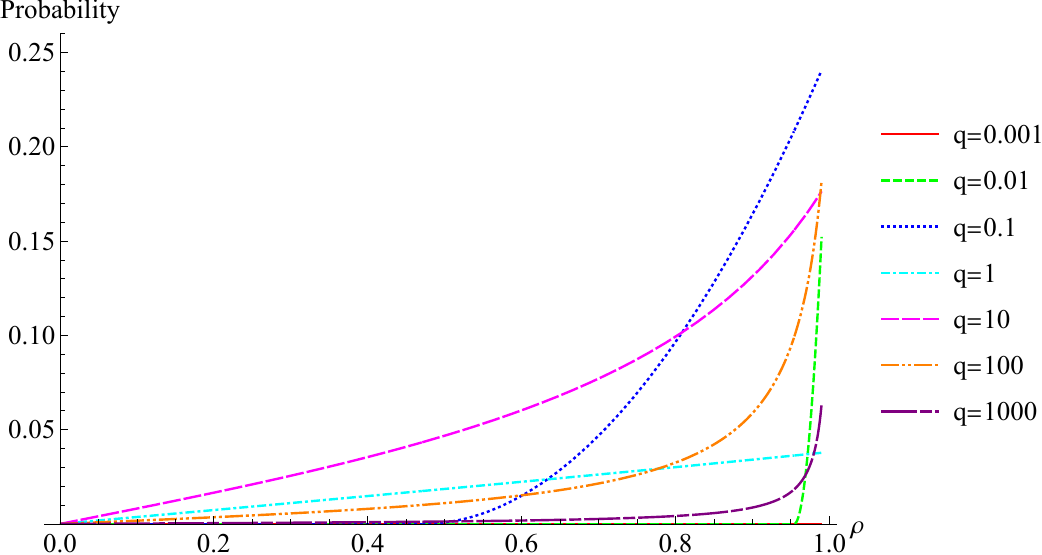}
\caption{\rev{Probability of error distance reduction in Example \ref{example-3.2}}}
  \label{fig:myPlot}
\end{figure}
\end{example}

\revc{\rev{Fig.} \ref{fig:myPlot} shows the numerical results for the function $\text{Prob}(\revc{\rho})$ as a function of the perturbation parameter $\revc{\rho}$. The different lines correspond to different values of the parameter $q$.}

From {\rev{Fig.}} \ref{fig:myPlot}, it can be seen that, in this 1-dimensional example, the probability of obtaining the coefficients ${\omega}_2$ and $\tilde{\omega}_2$ achieving the distance reduction is at most about 25\%, where the corresponding integrate region is \([0, 10^{-1} \omega_1]\times [0, 10^{-1} \tilde{\omega}_1]\).

\section{Conclusions and discussions}

\rev{This paper analyzes} sufficient conditions for the \rev{reduction in distance} between the minimizers of {\rev{nonconvex}}  quadratic functions in the trust region after two iterations. \rev{Note that quadratic functions are frequently used for local approximation in numerical optimization algorithms, but obtaining an accurate approximation is often challenging in most nonlinear cases.}  
If we have different \rev{local quadratic approximate function}s, the results in this paper provide a method to reduce the distance of the minimizers of different \rev{local quadratic approximate function}s by selecting the diagonal damping coefficients of the \rev{approximate function}s' Hessian \({\omega}_2\) and \(\tilde{\omega}_2\) accordingly. Besides, the example above shows that in some cases, the \rev{error distance} of the minimizers of the \rev{local quadratic approximate function} will increase with a high probability after one more iteration, and this derives the necessity of modifying the \rev{local quadratic approximate function} at each step in the optimization methods based on quadratic \rev{approximate function}s. In other words, one \rev{local quadratic approximate function} used by the trust-region methods is supposed to be updated after only one iteration, even if the \rev{approximate function} is {\rev{nonconvex}} and the iteration step reaches the bound of the trust region.

\noindent {\bf Acknowledgments}

The authors would like to thank the editors and the anonymous referees for their careful reading and providing valuable suggestions.

%


\noindent {\bf Data availability statements}

The codes that support the findings of this study are openly available in \href{https://github.com/PengchengXieLSEC/distance-reduction}{https://github.com/PengchengXieLSEC/distance-reduction}.

\section*{Declarations}

The author has no relevant financial or non-financial interests to disclose.






\end{document}